\documentclass[11pt,mathserif]{amsart}
%reqno option places the equation numbers on the right

%---- Packages------
\usepackage{microtype}
\usepackage[utf8]{inputenc}
\usepackage{lmodern}
\usepackage{amsthm, amssymb, bm}

\usepackage{geometry}
\usepackage{marginnote}
\usepackage{amsaddr}
\usepackage{eulervm}

%\usepackage{newtxmath} %https://www.ctan.org/pkg/newtx

%ver https://tex.stackexchange.com/questions/32100/what-does-each-ams-package-do y https://tex.stackexchange.com/questions/3238/bm-package-versus-boldsymbol/10643
\usepackage[dvipsnames]{xcolor} % si no ponés [dvipsnames] se pudre todo con algunos colores

\usepackage{hyperref}
\usepackage{cleveref}
\hypersetup{
	linkcolor  = violet,
	citecolor  = YellowOrange,
	urlcolor   = Aquamarine,
	colorlinks = true,
}

%----Packages for matrix writing-----
\RequirePackage{multicol} % permite que algo ocupe varias columnas
\RequirePackage{multirow}% permite que algo ocupe varias filas 

%---Commands-----

\setlength{\parindent}{1em}%Establecer sangría de primera línea. Esta sangría en LaTeX es por defecto un poco más larga de lo que la tradición tipográfica española prescribe (1em, es decir, el tamaño medio de la fuente y que equivale a la anchura de la letra "m". También se conoce esta medida como "cuadratín"). 

\setcounter{tocdepth}{2}
%\setcounter{secnumdepth}{3} %secnumdepth es un contador que indica la profundidad de secciones para las que
%se pintan los números. Por defecto vale 2, que representa las subsecciones
%(chapter => 0, section => 1...)
%Esto no haría que la subsubsection aparezca en el índice o tabla de materias.
%Si quieres que aparezca, pon también en el preámbulo
%\setcounter{tocdepth}{3}
%-------------------------------------------------------------------------------------
%``introduce'' ---->Así se escriben las comillas en Latex. primero con acentos, después con apóstrofe
%-----------------------------------------------------------------------------------

%Theorem enviroment

\newtheorem{theorem}{Theorem}[section]
\newtheorem{theorem*}{Theorem}%[section]

\newtheorem{lemma}[theorem]{Lemma}
\newtheorem{proposition}[theorem]{Proposition}

\numberwithin{equation}{section}

\def\eint#1#2{[\hspace{-3pt}[#1\,,\,#2]\hspace{-2.6pt}]}

\usepackage{xcolor}

%\title[Binary cyclotomic polynomials are flat]{Linear algebra and binary cyclotomic polynomials}

\title{Binary cyclotomic polynomials:  representation via words and algorithms}
\thanks{Supported by Project UNGS 30/3307, Project Stic AmSud 20STIC-06 and LIA SINFIN}
\thanks{The final authenticated publication is available at \url{https://doi.org/10.1007/978-3-030-85088-3_6}}

\author{Antonio Cafure - Eda Cesaratto}
\address{Instituto de Desarrollo Humano, UNGS; CONICET}
\email{acafure@campus.ungs.edu.ar, ecesaratto@campus.ungs.edu.ar}
%\urladdr{www.math.sc.edu/$\sim$howard} % Delete if not wanted.

%\author{Eda Cesaratto}\address{Instituto de Desarrollo Humano, UNGS; CONICET}\email{ecesarat@ungs.edu.ar}
%\urladdr{www.math.sc.edu/$\sim$second}

\keywords{Binary Cyclotomic Polynomials   \and Words \and Algorithms.}

\subjclass[2020]{11C08, 11Y16, 05A05}

\begin{document}

\maketitle
\begin{center}
 \sc Dedicated to the memory of Adri\'an Tato \'Alvarez    
\end{center}

\begin{abstract}
Cyclotomic polynomials are  basic objects in  Number Theory. Their properties  depend on the number of  distinct primes that intervene in the factorization of their order,  and the binary case is  thus the first nontrivial case. This paper sees the vector of coefficients of the polynomial as a word on a ternary alphabet $\{-1,0 ,+1\}$. It designs an efficient algorithm that  computes   a compact representation of this  word.  This algorithm   is of  linear time  with respect to the size of the output, and, thus, optimal.  
   {This approach allows to   recover known properties of coefficients of binary cyclotomic polynomials,  and extends to the case of  polynomials associated with numerical semi-groups of dimension 2. }
\end{abstract}

\section{Introduction}
\noindent
{\bf \em General description.} 
Cyclotomic polynomials are   defined as the irreducible factors in $\mathbb{Q}[x]$ of the polynomial $x^n-1$. The identity
\begin{equation} \label{eq:tn-1 equal to product of cyclotomic} x^n-1 = \prod_{d |n}  \Phi_d(x)\, 
\end{equation}
holds between the $d$-th cyclotomic polynomials $\Phi_d$.  The polynomial $\Phi_{d} (x)$ {equals} 
the product $\prod(x- \alpha)$ where $\alpha$  ranges over the primitive  $d$-th roots of unity.   It  has integer coefficients, {is self-reciprocal}, and irreducible over ${\mathbb Q}[x]$.  Its  degree is $\varphi(d)$, where $\varphi$ stands for Euler's totient function. 
The following  holds,  for any prime $p$, 
\begin{equation}\label{eq:Phipn from n}
 \begin{split}
  \Phi_{pn}(x)  & =   \Phi_{n}(x^{p})  
 \quad \mbox{if $p$ divides $n$, } 
  \\[1ex]	
  \Phi_{n}(x) \Phi_{pn}(x)  & =  \Phi_{n}(x^{p}) \quad \mbox{if $p$ does not divide $n$\, .}
  \end{split}
\end{equation}
 It is thus  enough to compute $\Phi_{d}$ when $d$ is squarefree.
Due to   {the equality}
\vskip 0.1 cm 
\centerline{  
$\Phi_{p} (x)  = x^{p-1} + \cdots + x + 1$,  }
\vskip 0.1 cm
\noindent
that holds  when  $d = p$ is prime,
the first nontrivial case occurs when $d$ is the product of two distinct primes $p$ and $q$, and defines what is called a \emph{binary} cyclotomic polynomial.

\smallskip
A cyclotomic polynomial  $\Phi_{d}$ is  usually  given by  its  dense representation, described  by the vector of  its coefficients in $\mathbb{Q}^{\varphi(d) + 1}$.  Many  properties  of interest  hold  on  the dense representation of a  binary cyclotomic polynomial $\Phi_{pq}$ (see \cite{HongLeePark12,LaLe99,Moree14}), for instance: 
\\$(a)$ the  polynomial $\Phi_{pq}$ has all its coefficients in the ternary alphabet $\{-1,0,+1\}$; \\ $(b)$ its nonzero coefficients alternate their signs: i.e. after a 1 follows a $-1 $ .
\\$(c)$ for $p < q$,  the maximum number of consecutive zeros in the vector of coefficient  %called the maximum gap,   
equals $p- 2$. 

\smallskip
Many algorithms  are designed for computing these polynomials, most of them being  based on identities of type \eqref{eq:tn-1 equal to product of cyclotomic} and \eqref{eq:Phipn from n}.   Around 2010, Arnold and Monagan  wished  to study  the coefficients of cyclotomic polynomials in an experimental way.   
 They  thus designed  efficient algorithms  that compute cyclotomic polynomials of very large order.  Their work \cite{ArMo11}  is the main reference on this subject.

\medskip 
\noindent
{\bf \em Our approach.} 
We are concerned  here with the representation and the computation of binary cyclotomic polynomials $\Phi_{pq}$. We let $m = \varphi(pq)= (p-1)(q-1)$, and 
we  consider  the {\em cyclotomic  vector} $\boldsymbol{a}_{pq}$, (that is moreover palindromic) formed with  coefficients of $\Phi_{pq}$,   from two points of view: 
\\
  $(i)$ first, classically, as a vector   of ${\mathbb Q}^{m + 1}$,   \\$(ii)$  but also as a word  of length $m + 1$  on  the ternary alphabet ${\mathcal A} = \{-1,0,  +1\}$.
  
  \smallskip
  \noindent
With the first point of view, we use classical arithmetical operations on vectors. With the second point of view, we use operations on words as cyclic permutations, concatenations and fractional powers.
In fact, we  use  {\em both} points of view  and then all the operations of these {\em  two} types. We have just to check that  the   sum   of   two words of $\mathcal{A}^{m+1}$   always belongs to $\mathcal{A}^{m+1}$.  We will see  in Lemma 1 that  this is  indeed the case   in our context:   the ``bad'' cases $(+1) + (+1)$ or $(-1) + (-1)$ never occur in the addition of two symbols of ${\mathcal A}$, and the sum remains ``internal". 

\smallskip
We  design an algorithm, called {BCW},   that {\em only} uses simple operations on words (concatenation, shift, fractional power, internal addition).  It takes as an input  a pair $(p, q)$ of two prime numbers $p, q$  with $p<q$, together with  {the quotient and the remainder of the} division of $q$ by $p$; this input is   thus  of size $\Theta (\log q)$. The algorithm outputs the   cyclotomic word $\boldsymbol{a}_{pq}$ of size $\Theta(pq)$; it   performs   a number $\Theta(pq)$  of operations on symbols of ${\mathcal A}$. The complexity of the algorithm is thus linear with respect to the size of the output, and thus optimal in this sense.  
The {\em proof} of the algorithm is  based on  arithmetical operations on polynomials, of type \eqref{eq:tn-1 equal to product of cyclotomic} or \eqref{eq:Phipn from n}, that are further  {\em transfered} into operations on words; however,  the algorithm  itself {\em does not perform} any  polynomial multiplication or  division.

\smallskip
To the best of our knowledge,   our approach, based on  combinatorics of words,  is quite novel inside the domain of cyclotomic polynomials. The compact representation of binary cyclotomic polynomials  described in Theorem 1  appears to be new and the algorithm {BCW}, whose complexity is $\Theta(pq)$, is more efficient that the already existing algorithms, whose complexity is $O(pq) E(p, q)$, where $E(p, q)$ is polynomial in $\log q$ (see Section \ref{sec:algo}).

\section{Statement of the main results }
We first define the main operations on words that will be used, then we state  our two main results:   the first one (Theorem 1) describes  a compact representation of the cyclotomic word, whereas the second result (Theorem 2) describes the algorithm -- so  called the \emph{binary cyclotomic word algorithm} (BCW algorithm for short)--  that is used to obtain it. Theorem 1 will be proven in Section \ref{section:binary cyclotomic polynomials} and Theorem 2 in Section \ref{sec:algo}.

\medskip
\noindent 
{\bf \em Operations on words.} 
We first define  the operations on words that we will use along the paper: concatenation and fractional power, cyclic permutation, addition.  

\smallskip\noindent 
{\bf \em Concatenation and fractional power.} Given two words $\boldsymbol{u}=u_{0}u_{1}\dots u_{k-1}$ and $\boldsymbol{v}=v_{0}v_{2}\dots v_{\ell-1}$ from the alphabet $\mathcal{A}$,  its  \emph{concatenation} is denoted by $\boldsymbol{u}\cdot\boldsymbol{v}$. For any  $s \in \mathbb{N}$, the  \emph{$s$-power} of a word is denoted by $\boldsymbol{v}^{s}$, and $\boldsymbol{v}^{0} = \epsilon$ is the empty word.
\\
Let $k$ and $p$ be positive integers. For any  $\boldsymbol{v}\in \mathcal{A}^{p}$,  the 
\emph{fractional power}  $\boldsymbol{v}^{k/p}$ of $\boldsymbol{v}$ is the element of $\mathcal{A}^{k}$ defined as follows:
    \\ $(a)$  For $k < p$, then $\boldsymbol{v}^{k/p}$ is the \emph{truncation} of the word $\boldsymbol{v}$ to its prefix of length $k$.
    \\$(b)$ For $k \geq p $, then $           \boldsymbol{v}^{k/p} = \boldsymbol{v}^{\lfloor k/p \rfloor}\cdot\boldsymbol{v}^{(k \!\!\pmod p)/p}$.

\smallskip\noindent
{\bf \em Circular permutation. } The \emph{left circular permutation} $\sigma$ 
is the  mapping $\sigma: \mathcal{A}^{p} \rightarrow \mathcal{A}^{p}$ defined as
   \begin{align}\label{eq: def of circular permutation}
  \hbox {if  $ \boldsymbol{v} = v_{0}\,  v_{1}\,v_2  \cdots \, v_{p-2}\, v_{p-1}$, then} \quad  \sigma(\boldsymbol{v})
  = v_{1}\, v_{2}  \cdots v_{p-2}\,  v_{p-1}\, v_{0}.
   \end{align}
    The mapping 
  $\sigma$ has order $p$: it satisfies for $s\in {\mathbb Z}$,
  
  \vskip 0.1cm
  \centerline{$
  \sigma^s = \sigma^{s {\, \rm mod\, }  p}, \qquad  \sigma^p = {\rm Id} \, .$}

\noindent
{\bf \em Addition. } The paper is based on a transfer  between algebra and combinatorics on words and leads to define addition between words;  
 we begin  with the usual  sum between two vectors of ${\mathbb Z}^k$, component by component: the sum  $ \boldsymbol{u}+\boldsymbol{v}$ between two words  $\boldsymbol{u}=u_{0}u_{1}\dots u_{k-1}$ and $\boldsymbol{v}=v_{0}v_{2}\dots v_{k-1}$  is

\vskip 0.1 cm
\centerline{$ 
\boldsymbol{u}+\boldsymbol{v}=(u_{0}+v_{0})(u_{1}+v_{1})\cdots (u_{k-1}+v_{k-1}).$}
\vskip 0.1 cm
\noindent Then,  the sum  of two words from $\mathcal{A}$ may have symbols not in $\mathcal{A}$, due to the two bad  cases   $[+1 + (+ 1)]$ and $[ (- 1) + (-1)]$. However, we will prove in Lemma \ref{lemma:suma de los di no se sale de A} that this  will never occur in our framework.
 Then, an addition between two symbols of ${\mathcal A}$   will be always   an \emph{internal} operation,   with  a sum  that stays inside $\mathcal{A}$ 
 and  coincides with the sum in $\mathbb{Z}$:

\vskip 0.1cm
\centerline{$
   0+0 = 0; \quad 1 +0= 1;\quad   -1 + 0= -1; \quad  -1 + 1 = 0;\quad   1 + (-1)= 0 \, .$}
\medskip
\noindent{\bf \em A compact representation of binary cyclotomic polynomials.}  

\begin{theorem}\label{th:complete characterization of binary cyclotomic polynomials}
 Let  $p  < q$ be prime  numbers.  Consider  the alphabet
 $\mathcal{A}=\{-1,0,1\}$ and    the left circular permutation  $\sigma$ defined in \eqref{eq: def of circular permutation}. 
 With  the  $(p-1)$ words  $ \boldsymbol{d}_0,  \boldsymbol{d}_1, \ldots  \boldsymbol{d}_{p-2} \in \mathcal{ A}^p$, 
 
 \vskip 0.1 cm
 \centerline{ 
$ 
 \boldsymbol{d}_{0} = 1 (-1)0 \cdots 0,    \qquad  \quad  \boldsymbol{d}_i = \sigma^{q} ( \boldsymbol{d}_{i-1}) = \sigma^{iq}( \boldsymbol{d}_0), \quad \hbox{for $ i \ge 1$} ,
$}
\vskip 0.1cm
\noindent
define the  $(p-1)$ words $\boldsymbol{\omega}_0, \boldsymbol{\omega}_1, \ldots, \boldsymbol{\omega}_{p-2} $, 
\begin{equation}\label{eq: definitions of omega_i}
\boldsymbol{\omega}_0 =  \boldsymbol{d}_0, \qquad \boldsymbol{\omega}_i = \boldsymbol{\omega}_{i-1} + \boldsymbol{d}_i, \quad \hbox{ for $i \ge 1$. } 
\end{equation}
Then, the following holds: 
\begin{enumerate}%[(a)]
\item\label{item thm1 a} The words  $\boldsymbol{\omega}_0, \boldsymbol{\omega}_1, \ldots \boldsymbol{\omega}_{p-2}$  belong to $ \mathcal{ A}^p $: each  addition  $\boldsymbol{\omega}_{i-1}+\boldsymbol{d}_{i}$  is internal  in $\mathcal{A}$. These $(p-1)$ words  only depend on the pair $(p, r = q \!\!\pmod  p)$.

\item  Let  $s=\lfloor q/p\rfloor$. The  cyclotomic word 
$\boldsymbol{a}_{pq}$ 
coincides  with the prefix of length $\varphi(pq)+1$ of the word 
\begin{equation} \label{bpq} \boldsymbol{b}_{pq} = \boldsymbol{\omega}_0^s \cdot \boldsymbol{\omega}_0^{r/p}\cdot \boldsymbol{\omega}_1^s \cdot \boldsymbol{\omega}_1^{r/p}\cdot \ldots\cdot  \boldsymbol{\omega}_{p-2}^s\cdot \boldsymbol{\omega}_{p-2}^{r/p}\in \mathcal{A}^{(p-1)q},
\end{equation}
 and is written  in fractional power notation as 
 \[
    \boldsymbol{a}_{pq}= \boldsymbol{\omega}_{0}^{q/p}
    \cdot  \boldsymbol{\omega}_{1}^{q/p} \cdots \,\boldsymbol{\omega}_{{p-3}}^{q/p}\cdot  \boldsymbol{\omega}_{{p-2}}^{(q-p+2)/p}.
  \]
  \end{enumerate}
\end{theorem}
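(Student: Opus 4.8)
The plan is to transport every word operation into the ring $R_p:=\mathbb{Z}[x]/(x^p-1)$ and then reduce the statement to one short polynomial computation. To a word $\boldsymbol{v}=v_0v_1\cdots v_{p-1}\in\mathbb{Z}^p$ I associate the class $\widehat{\boldsymbol v}(x):=\sum_{\ell=0}^{p-1}v_\ell x^\ell$ of $R_p$. Under this correspondence the addition of words is the addition of $R_p$, and the left circular permutation $\sigma$ is multiplication by $x^{-1}=x^{p-1}$; hence $\boldsymbol{d}_i$ corresponds to $x^{-iq}(1-x)$ and $\boldsymbol{\omega}_i=\sum_{j=0}^i\boldsymbol{d}_j$ to $(1-x)\sum_{j=0}^i x^{-jq}$ in $R_p$.

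For part (1): since $\gcd(p,q)=1$, the residues $t_j:=(-jq)\bmod p$ for $0\le j\le i$ are pairwise distinct, and reading off the coefficient of $x^\ell$ in $(1-x)\sum_{j=0}^i x^{-jq}$ shows it equals $[\ell\in S_i]-[\ell-1\in S_i]$ with $S_i=\{t_0,\dots,t_i\}$ (indices mod $p$); this lies in $\mathcal{A}$, so $\boldsymbol{\omega}_i\in\mathcal{A}^p$. For the internal-addition claim, $\boldsymbol{d}_i$ corresponds to $x^{t_i}-x^{t_i+1}$ and $t_i\notin S_{i-1}$: a forbidden sum $(+1)+(+1)$ or $(-1)+(-1)$ at position $\ell$ in $\boldsymbol{\omega}_{i-1}+\boldsymbol{d}_i$ would force $\ell=t_i\in S_{i-1}$, respectively $\ell-1=t_i\in S_{i-1}$, both impossible. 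Finally $\sigma^p=\mathrm{Id}$ gives $\boldsymbol{d}_i=\sigma^{(ir)\bmod p}(\boldsymbol{d}_0)$, so the $\boldsymbol{\omega}_i$ depend only on $(p,r)$.

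For part (2), let $B(x)$ be the polynomial with coefficient sequence $\boldsymbol{b}_{pq}$; since $\boldsymbol{\omega}_i^s\cdot\boldsymbol{\omega}_i^{r/p}=\boldsymbol{\omega}_i^{q/p}$ has length $q$, we get $B(x)=\sum_{i=0}^{p-2}x^{iq}\Omega_i(x)$ where $\Omega_i$ is the polynomial (of degree $<q$) of $\boldsymbol{\omega}_i^{q/p}$, and the $p-1$ blocks do not overlap. The key step is the identity
\[
(1-x^p)\,\Omega_i(x)\;=\;\widehat{\boldsymbol{\omega}_i}(x)\;-\;x^q\,\widehat{\sigma^q(\boldsymbol{\omega}_i)}(x),
\]
valid because $p<q$: it follows at once from the telescoping in $\sum_{\ell=0}^{q-1}(\boldsymbol{\omega}_i)_{\ell\bmod p}\,(x^\ell-x^{\ell+p})$. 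Now the hypothesis $\sigma^q(\boldsymbol{d}_j)=\boldsymbol{d}_{j+1}$ gives, by $\mathbb{Z}$-linearity of $\sigma^q$, that $\sigma^q(\boldsymbol{\omega}_i)=\boldsymbol{\omega}_{i+1}-\boldsymbol{d}_0$ for $0\le i\le p-3$; and since $q$ is invertible mod $p$ one has $\sum_{j=0}^{p-1}x^{-jq}=\sum_{a=0}^{p-1}x^a$ in $R_p$, so $\sum_{j=0}^{p-1}\boldsymbol{d}_j$ corresponds to $(1-x)\sum_{a=0}^{p-1}x^a=1-x^p\equiv 0$, whence $\sigma^q(\boldsymbol{\omega}_{p-2})=\sum_{j=1}^{p-1}\boldsymbol{d}_j=-\boldsymbol{d}_0$. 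Substituting these into the key identity and summing over $i=0,\dots,p-2$, the terms $\widehat{\boldsymbol{\omega}_i}$ with $i\ge 1$ cancel by telescoping, the shift terms collapse, and using $\widehat{\boldsymbol{\omega}_0}=\widehat{\boldsymbol{d}_0}=1-x$ we obtain $(1-x^p)B(x)=(1-x)\sum_{k=0}^{p-1}x^{kq}$.

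Finally, dividing by $x^p-1=(x-1)(1+x+\cdots+x^{p-1})$ and using $\sum_{k=0}^{p-1}x^{kq}=(x^{pq}-1)/(x^q-1)$, the last identity becomes $B(x)=(x-1)(x^{pq}-1)/\big((x^p-1)(x^q-1)\big)=\Phi_{pq}(x)$ by \eqref{eq:tn-1 equal to product of cyclotomic}. Since $\deg\Phi_{pq}=\varphi(pq)=m$ and $\boldsymbol{b}_{pq}$ has length $(p-1)q\ge m+1$, the word $\boldsymbol{a}_{pq}$ is exactly the prefix of length $m+1$ of $\boldsymbol{b}_{pq}$ (the remaining $p-2$ letters being $0$). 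The fractional-power form is then bookkeeping: the first $p-2$ blocks occupy $(p-2)q\le m$ letters, and the prefix of length $m+1-(p-2)q=q-p+2$ of the last block $\boldsymbol{\omega}_{p-2}^{q/p}$ is, by definition of the fractional power, $\boldsymbol{\omega}_{p-2}^{(q-p+2)/p}$. I expect the only delicate point to be the coefficient-level bookkeeping behind the key identity — keeping indices straight modulo $p$ and treating the boundary block $i=p-2$ separately — with everything else routine once the dictionary between words and $R_p$ is in place.
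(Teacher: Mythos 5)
Your proof is correct, and for part (2) it takes a genuinely different route from the paper's. The paper works in $\mathbb{Q}^{m+1}$ with the nilpotent shift matrix $S$: it specializes $(1-x^q)\Phi_{pq}(x)=(1-x)\Phi_q(x^p)$ at $S$, inverts $\mathrm{Id}-S^q$ as the finite geometric sum $\sum_{i=0}^{p-2}S^{iq}$, writes $\boldsymbol{a}_{pq}$ as a sum of $q$-shifted copies of the periodic word $\boldsymbol{d}_0^{(m+1)/p}$, and then extracts the $\boldsymbol{\omega}_i$ via a block-decomposition lemma (Proposition \ref{prop:resultados previos sobre palabras}) and a column-by-column summation — a \emph{derivation} of the word starting from $\Phi_{pq}$. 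You instead \emph{verify} the candidate: encoding length-$p$ words in $\mathbb{Z}[x]/(x^p-1)$, you compute the generating polynomial $B$ of $\boldsymbol{b}_{pq}$ through the telescoping identity $(1-x^p)\Omega_i=\widehat{\boldsymbol{\omega}_i}-x^q\,\widehat{\sigma^q(\boldsymbol{\omega}_i)}$ (which I checked; it does replace Proposition \ref{prop:resultados previos sobre palabras} and the column bookkeeping), and the collapse to $(1-x^p)B=(1-x)(x^{pq}-1)/(x^q-1)$ identifies $B=\Phi_{pq}$ by the product formula, with the $p-2$ trailing zeros accounting for the truncation. Each approach buys something: the paper's argument explains where the $\boldsymbol{\omega}_i$ come from without assuming the answer, and its operator calculus is what generalizes to the semigroup polynomials $F_{p,q}$ (though your route also applies verbatim there, since you only use the product formula); yours is shorter, treats $p=2$ uniformly rather than as a separate case, and — notably — your formula $\omega_{i,\ell}=[\ell\in S_i]-[\ell-1\in S_i]$ with $S_i=\{(-jq)\bmod p: 0\le j\le i\}$ is an \emph{exact} description of $\boldsymbol{\omega}_i$, strictly stronger than the paper's Lemma \ref{lemma:suma de los di no se sale de A}, which only obtains one-directional implications because of possible cancellations; that exact description is precisely the control over cancellations that the conclusion flags as needed for the maximum-gap results.
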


\medskip
\noindent
{\bf \em  The  algorithm BCW.}  
Given two positive primes $p$ and $q$ with $p<q$, together with the quotient $s=\lfloor q/p\rfloor$  and the remainder $r=q \pmod p$ %of the division 
of $q$ by $p$,  the BCW algorithm proceeds in  three main steps. The first two steps define the precomputation phase  and only depend on the pair $(p, r= q \pmod p)$, whereas the third step performs the computation itself and depends on the pair $(q, p)$.

\smallskip 
\noindent {\bf \em Precomputation phase.} It  has two steps and computes
\\$(i)$  the words $\boldsymbol{d}_i$ for $i\in \eint{0}{p-2}$ defined in \eqref{eq: definitions of omega_i},
\\ $(ii)$   the words $\boldsymbol{\omega}_{i}$ defined in \eqref{eq: definitions of omega_i}
and their prefixes
    $\boldsymbol{\omega}_{i}^{r}$ for $i\in \eint{0}{p-2}$.

\smallskip 
\noindent {\bf \em  Computation phase.} It  computes the word $\boldsymbol{b}_{pq}$ defined in \eqref{bpq}    with  concatening   powers of  $\boldsymbol{\omega}_0,\ldots, \boldsymbol{\omega}_{p-2}$. It   depends on $s$. The cyclotomic word $\boldsymbol{a}_{pq}$ is obtained from 
$\boldsymbol{b}_{pq}$ by deleting its suffix of length $p-2$.

\medskip

The BCW algorithm only performs the  operations  described at the beginning of this section, 
on  words of length at most $p$, each one  having a cost  $\Theta(p)$. The precomputation phase performs

    -- cyclic permutations of any order of a word of length $p$;
    
      -- truncations of a suffix of length $\ell<p$; %has cost %at most $p$; 
       
  -- internal additions between  two words of length $p$. %has cost $p$.%has cost $p$.

\smallskip
\noindent{}
The computation phase performs   concatenations of a word of length at most $p$ (at the end of  an already existing word of any length); 

\medskip 
\noindent 
 Theorem 2   summarizes the analysis of the complexity  of the BCW algorithm. It will be proven in Section \ref{sec:algo}.

\begin{theorem} \label{th:cost analysis of the algorithm}
Suppose that  the  two prime numbers $p<q$ are given together with the quotient 
{$s=\lfloor q/p\rfloor$ and the remainder $r=q\!\!\pmod p $}. Then,  the BCW  algorithm  computes  the cyclotomic word $\boldsymbol{a}_{pq}$ in  $\Theta(pq)$  operations on words: 
\begin{itemize} 
\item[$(a)$]  The precomputation phase  only depends on the pair $(p,r=q \pmod p)$ and its cost is $\Theta(p^2)$. 
\item[$(b)$] The  computation phase performs $s(p-2)$ concatenations  of words of length $p$, and  its cost is  $\Theta (pq)$. 
 \end{itemize}
 \end{theorem}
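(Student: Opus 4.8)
The plan is to carry out a precise bookkeeping of the word operations performed by the three steps of the BCW algorithm, under the cost model fixed in the statement: every elementary operation (cyclic permutation, truncation, internal addition, concatenation) acts on words of length at most $p$ and therefore has cost $\Theta(p)$; in particular, appending a word of length $\le p$ at the end of the word built so far costs $\Theta(p)$, regardless of that word's current length.

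First I would treat the precomputation phase. Step $(i)$ writes $\boldsymbol{d}_0 = 1(-1)0\cdots 0$ (one word of length $p$) and then, since $\sigma$ has order $p$ and hence $\sigma^{q}=\sigma^{r}$, produces each $\boldsymbol{d}_i$ from $\boldsymbol{d}_{i-1}$ by a single cyclic permutation $\sigma^{r}$ of a length‑$p$ word; this is $p-2$ permutations. Step $(ii)$ sets $\boldsymbol{\omega}_0=\boldsymbol{d}_0$, computes $\boldsymbol{\omega}_i=\boldsymbol{\omega}_{i-1}+\boldsymbol{d}_i$ by $p-2$ additions of length‑$p$ words — each one internal by item~(1) of Theorem~\ref{th:complete characterization of binary cyclotomic polynomials} (itself relying on Lemma~\ref{lemma:suma de los di no se sale de A}), hence a legitimate word operation — and truncates each $\boldsymbol{\omega}_i$ to its length‑$r$ prefix $\boldsymbol{\omega}_i^{r/p}$, i.e. $p-1$ truncations. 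Altogether the precomputation performs $\Theta(p)$ word operations, each of cost $\Theta(p)$, so its cost is $O(p^{2})$; since it outputs $p-1$ words of length $p$ it is also $\Omega(p^{2})$, whence $\Theta(p^{2})$. That it depends only on $(p,r)$ is immediate from the description ($\boldsymbol{d}_0$ depends only on $p$, the permutation applied is $\sigma^{r}$, the $\boldsymbol{\omega}_i$ are determined by the $\boldsymbol{d}_i$, and the prefixes have length $r$) and is already recorded in Theorem~\ref{th:complete characterization of binary cyclotomic polynomials}.

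Next I would treat the computation phase. By \eqref{bpq}, $\boldsymbol{b}_{pq}$ is obtained by concatenating, for $i\in\eint{0}{p-2}$, the block $\boldsymbol{\omega}_i^{s}\cdot\boldsymbol{\omega}_i^{r/p}$; assembling $\boldsymbol{\omega}_i^{s}$ costs $s$ appends of the length‑$p$ word $\boldsymbol{\omega}_i$, followed by one append of the precomputed length‑$r$ prefix. Counting the appends used to assemble $\boldsymbol{b}_{pq}$ gives $\Theta(sp)$ full‑length concatenations (the precise count being the $s(p-2)$ stated, together with $O(p)$ concatenations of words of length $r<p$); as each costs $\Theta(p)$, the phase costs $\Theta(s\,p^{2})$, and passing from $\boldsymbol{b}_{pq}$ to $\boldsymbol{a}_{pq}$ by deleting a suffix of length $p-2$ adds only $\Theta(p)$. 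It remains to express $s\,p^{2}$ in terms of $p$ and $q$: from $q=sp+r$ with $0\le r<p$ we get $sp\le q$ and $sp=q-r>q-p$, the latter being $\ge q/2$ when $q\ge 2p$, while if $p<q<2p$ then $s=1$ and $sp=p>q/2$; in all cases $sp=\Theta(q)$, so the computation phase costs $\Theta(pq)$. Adding the two phases and using $p<q$ (so $p^{2}<pq$), the algorithm runs in $\Theta(p^{2})+\Theta(pq)=\Theta(pq)$ operations on words; the matching lower bound is anyway clear, since the algorithm writes out the word $\boldsymbol{b}_{pq}$ of length $(p-1)q=\Theta(pq)$.

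The proof is essentially this accounting, and there is no deep obstacle. The only points that need care are: $(a)$ insisting that, in the chosen cost model, appending a word of length at most $p$ costs $\Theta(p)$ independently of the length accumulated so far — without this the computation phase would be charged $\Theta\bigl(p\cdot pq\bigr)$ instead of $\Theta(pq)$; $(b)$ the elementary estimate $sp=\Theta(q)$, which requires the small separate check for $p<q<2p$; and $(c)$ invoking item~(1) of Theorem~\ref{th:complete characterization of binary cyclotomic polynomials} so that every addition in the precomputation is genuinely internal to $\mathcal{A}$, guaranteeing the algorithm never leaves the alphabet and that each such step really is one word operation of cost $\Theta(p)$.
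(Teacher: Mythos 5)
Your proposal is correct and follows essentially the same route as the paper's proof (Lemmas~\ref{lemma: precomputation step} and \ref{lemma: concatenation step}): count the $\Theta(p)$ word operations of the precomputation phase and the $\Theta(sp)=\Theta(q)$ concatenations of the computation phase, each of cost $\Theta(p)$. Your additional checks — the explicit verification that $sp=\Theta(q)$, the output-size lower bound, and the remark that the cost model charges $\Theta(p)$ per append independently of the accumulated length — are sensible elaborations of points the paper leaves implicit.
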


\medskip

\section{A  compact representation for  binary cyclotomic  polynomials}\label{section:binary cyclotomic polynomials}

This section is devoted to  the proof of  Theorem 1 and is organized into three main parts. We first  consider  the  particular case $p=2$,   then we describe the general case 
$p>2$, and we  finally prove two auxiliary results (Proposition 1 and Lemma 1) that have been used in the proof of the general case.

\medskip 
\noindent {\bf \em  Particular case $p=2$. }
The polynomial $\Phi_{2q}$ satisfies the relation
$\Phi_{2q}(x)=\Phi_{q}(-x)$. Since $\Phi_q(x)=1+x+\dots+x^{q-1}$, it turns out that  $\Phi_{2q}$ is a polynomial of degree $q-1$ whose nonzero coefficients take  the values  $1$ and $-1$ alternatively. 
The equality  $m +1 =\varphi(2 q) + 1 =q $ holds and entails $\lfloor (m+1)/q \rfloor = 1$.    
There is only one word $\boldsymbol{\omega}_0=1(-1)$ which has to be concatenated with itself $\lfloor q/2\rfloor $ times. The resulting word is \[\boldsymbol{\omega}_0^{\lfloor q/2\rfloor}=1(-1)1(-1)\cdots 1(-1),\]
which coincides with the vector $\boldsymbol{a}_{2 q}$ of coefficients of $\Phi_{2q}$.
This ends the proof for the case $p=2$.

 \medskip 
\noindent {\bf \em  General case $p > 2$. }
We let  $m=\varphi(pq)$.  We  consider the set of words 
$\mathcal{A}^{m+1}$ as embedded in $\mathbb{Q}^{m+1}$ because we only deal with  internal additions between words. This fact  that is stated as the assertion $(\ref{item thm1 a})$ of Theorem \ref{th:complete characterization of binary cyclotomic polynomials} is proven in  Lemma \ref{lemma:suma de los di no se sale de A}.  We  then do not distinguish between words or vectors,  and the notation 
$\boldsymbol{v}=v_{0}\cdots v_{m}$ denotes both a word $\boldsymbol{v}\in \mathcal{A}^{m+1}$ and/or a vector of $\mathbb{Q}^{m+1}$.  We write  $v_{i,j}$ to denote  the $j$-th symbol  (or coordinate)  of a word 
$\boldsymbol{v}_i$. 

\smallskip  
The proof  is described along three main steps. In Step 0, we   deal with linear algebra  over the vector space ${\mathbb Q}^{m+1}$,  introduce the shift-matrix $S$ and  recall its main properties. Then  Step 1 transfers polynomial identities into linear algebra  equations on    ${\mathbb Q}^{m+1}$.  Finally,  Step 2   adopts the point of view of words.

\medskip
\noindent
 {\bf \em  Step 0. Starting with linear algebra.} 
 We consider  the vector space  $\mathbb{Q}^{m+1}$, and  the   linear map
 \[ \  S: \mathbb{Q}^{m+1}  \rightarrow  \mathbb{Q}^{m+1}  \qquad 
S  \boldsymbol{v}  = 0v_{0} \ldots v_{m-1} \quad \hbox{ for any } 
\boldsymbol{v}=v_{0}\ldots  v_{m}\in  \mathbb{Q}^{m+1} .\] 
We denote by $ \boldsymbol{e}_{0},\boldsymbol{e}_{1},\dots, \boldsymbol{e}_{m}$ the vectors of the canonical basis of  $\mathbb{Q}^{m+1}$: the word  $\boldsymbol{e}_{i}$  is the word  whose symbols  satisfy  $e_{i,i}=1$ and $e_{i,j}=0$ for $j\neq i$. 
 We deal with  the  matrix  associated with the linear map $S$ in the canonical basis that is also denoted by $S$ and called the shift-matrix.  
This is a nilpotent  matrix   of size $(m+1)\times(m+1)$ having  $1$-s along the main lower subdiagonal and $0$-s everywhere else, that satisfies %Notice that
\[S^{m+1}=0 \in \mathbb{Q}^{(m+1)\times (m+1)} \quad \hbox{ and }\quad S^{\ell} \boldsymbol{e}_0=\boldsymbol{e}_{\ell}, \hbox{ for any } \ell\in \eint{0}{m}.
\]
 With a polynomial $f(x)=a_0+a_1x+a_2x^2+\dots+ a_m x^m $,  we associate  the  matrix $ f(S) = a_0 {\rm Id}+a_1S +a_2S^2+\dots +a_m S^m$ (here ${\rm Id}$ is the identity matrix)  together with  the vector of coefficients $\boldsymbol{a}=a_0a_1\dots a_{m}\in \mathbb{Q}^{m+1}$. The following relation holds
 \begin{equation}\label{eq:a=f(s)}
\boldsymbol{a}= a_0S^0\boldsymbol{e}_0+a_1S^1\boldsymbol{e}_0+\dots +a_m S^m\boldsymbol{e}_0=f(S) \boldsymbol{e}_0.
\end{equation}
Moreover, $f(S)$ is invertible if and only if   $a_0\neq 0$. 

\medskip

\noindent {\bf \em Step 1. From polynomial identities to linear algebra equations.}
Our starting point is the polynomial identity \eqref{eq:Phipn from n}. When applied to the case when  $p$ and $q$ are distinct primes, it writes as
 \begin{equation}\label{eq:identidad de partida}
 {(1-x^{q})} \Phi_{pq}(x)={(1-x)}\Phi_{q}(x^{p}). 
 \end{equation}
   We now  specialize the previous identity in  the shift-matrix $S$ of size $(m+1)\times (m+1)$, apply it   to the vector  $\boldsymbol{e}_0$, and  obtain 
 \begin{align*}
   ( {\rm Id}-S^{q})\Phi_{pq}(S)\boldsymbol{e}_{0}= ({\rm Id}-S)\Phi_{q}(S^{p})\boldsymbol{e}_{0}.
 \end{align*}
 Relation \eqref{eq:a=f(s)} entails  the equality \begin{align} \label{eq:apq}
  \boldsymbol{a}_{pq} = \Phi_{pq}(S)\boldsymbol{e}_{0}=({\rm Id}-S^{q} )^{-1}({\rm Id}-S)\Phi_{q}(S^{p})\boldsymbol{e}_{0}.
 \end{align}
 Since $S$ is a nilpotent matrix, the inverse matrix $({\rm Id}-S^{q})^{-1}$ equals  
 \begin{equation} \label{inv} 
   \Big( {\rm Id}-S^{q}\Big)^{-1}=\sum_{i=0}^{\lfloor (m+1) /q \rfloor} S^{iq}.
 \end{equation}
 Next,  for $p>2$, the  following holds
\[
 m +1 = q ({p-2}) +  q -  (p-1) + 1, \quad  0 \leq q -  (p-1) + 1 < q \, ,
 \]
  and entails the equality $\lfloor (m+1)/q \rfloor = {p-2}$. Following \eqref{eq:apq} and \eqref{inv}, the  vector $\boldsymbol{a}_{pq}$ satisfies 
  \begin{equation}\label{eq:computation of apq via e0}
  \boldsymbol{a}_{pq} = \Phi_{pq}(S) \boldsymbol{e}_{0} =\sum_{i=0}^{ p-2} S^{iq}\boldsymbol{c}_{pq}\quad \hbox{ with } \quad \boldsymbol{c}_{pq}=
  ({\rm Id}-S)\Phi_q(S^{p})\boldsymbol{e}_{0}.
\end{equation}
The relation $S^{\ell}\boldsymbol{e}_{0} = \boldsymbol{e}_{\ell }$  holds for  integer $\ell \in \eint{0}{m}$  and yields
 \[
   \Phi_{q}(S^p)\boldsymbol{e}_0 = \sum_{j = 0}^{\lfloor \frac{m+1}{p} \rfloor} \boldsymbol{e}_{j p }.
 \]  
 Then the vector  $\boldsymbol{c}_{pq} \in \mathbb{Q}^{m+1} $ defined in \eqref{eq:computation of apq via e0} is %the element of $$ given by
\begin{align}\label{eq: cpq}
   \boldsymbol{c}_{pq} & =  \sum_{j = 0}^{\lfloor \frac{m+1}{p} \rfloor} (\boldsymbol{e}_{j p  } - \boldsymbol{e}_{j p +1})=\underset{p}{\underbrace{1 -10 \cdots 0}}\cdots   \underset{p}{\underbrace{1 -10 \cdots 0}} \, \underset{ m+1 \pmod p}{\underbrace{1 -1 0 \cdots 0}}.
\end{align}

\medskip
\noindent
{\bf \em  
Step 2. From vectors to words.}
 With \eqref{eq: cpq}, we now view  the vector $\boldsymbol{c}_{pq}$ 
 as a word of  $\mathcal{A}^{m+1}$ that   is written  as a fractional power of the word $\boldsymbol{d}_0\in \mathcal{A}^{p}$ defined in the statement of Theorem \ref{th:complete characterization of binary cyclotomic polynomials}: 
\begin{equation}\label{eq: cpq as a power}
\boldsymbol{c}_{pq}= \boldsymbol{d}_{0}^{(m+1)/p}\in \mathcal{A}^{m+1}.
\end{equation}
 We now use  the next proposition (Proposition 1) that  states that such a fractional power can be written in terms of the transforms $\boldsymbol{d}_i$ of $\boldsymbol{d}_0$ via cyclic permutations. The proof of Proposition 1 will be found at the end of this section. 

\begin{proposition}\label{prop:resultados previos sobre palabras}
 The fractional power $ \boldsymbol{d}_{0}^{(m+1)/p}$  of $\boldsymbol{d}_{0}$ is written as a concatenation of fractional powers of  the  cyclic transforms $\boldsymbol{d}_i=\sigma^{iq}( \boldsymbol{d}_{0})$:  
\[
\boldsymbol{c}_{pq}= \boldsymbol{d}_{0}^{(m+1)/p} = \boldsymbol{d}_{0}^{q/p}\cdot \boldsymbol{d}_{1}^{q/p} \cdots  \boldsymbol{d}_{p-3}^{q/p}  \cdot \boldsymbol{d}_{p-2}^{(q-p+2)/p}.
\]
\end{proposition}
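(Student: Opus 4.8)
The plan is to show that the fractional power $\boldsymbol{d}_0^{(m+1)/p}$, when read block by block in chunks of length $p$, produces exactly the cyclic shifts $\boldsymbol{d}_i = \sigma^{iq}(\boldsymbol{d}_0)$ — but with a subtle twist, because consecutive blocks of the word $\boldsymbol{d}_0^{(m+1)/p}$ are literal repetitions of $\boldsymbol{d}_0$, not shifts of it. The resolution is that the block decomposition we actually want is not into length-$p$ pieces but into length-$q$ pieces, and a length-$q$ piece of $\boldsymbol{d}_0^\infty$ starting at position $iq$ is precisely $\boldsymbol{d}_0^{q/p}$ read starting from offset $iq \bmod p$, which is $\sigma^{iq}(\boldsymbol{d}_0)^{q/p} = \boldsymbol{d}_i^{q/p}$ since the fractional power and the shift interact correctly on a periodic word. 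So first I would record the elementary identity that for a word $\boldsymbol{v}\in\mathcal{A}^p$ and integers $a,k\ge 0$, the factor of the infinite periodic word $\boldsymbol{v}^\infty$ occupying positions $[a, a+k)$ equals $(\sigma^{a\bmod p}\boldsymbol{v})^{k/p}$; this is immediate from the definition of $\sigma$ and of fractional powers in case $(b)$ of the definitions section.

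Next I would apply this with $\boldsymbol{v} = \boldsymbol{d}_0$ and the cut points $0, q, 2q, \ldots, (p-2)q$. The total length is $m+1$. Using the identity $m+1 = q(p-2) + (q-p+2)$ already established in Step 1 of the main proof (with $0 \le q-p+2 < q$, which holds since $p>2$ and $p<q$), the word $\boldsymbol{d}_0^{(m+1)/p}$ splits as the concatenation of $p-2$ consecutive blocks of length $q$ followed by a final block of length $q-p+2$. By the elementary identity, the $i$-th length-$q$ block (for $0\le i\le p-3$) is $(\sigma^{iq\bmod p}\boldsymbol{d}_0)^{q/p} = \boldsymbol{d}_i^{q/p}$, using $\sigma^{iq} = \sigma^{iq\bmod p}$ and the definition $\boldsymbol{d}_i = \sigma^{iq}(\boldsymbol{d}_0)$. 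The final block starts at position $(p-2)q$ and has length $q-p+2$, so it equals $(\sigma^{(p-2)q\bmod p}\boldsymbol{d}_0)^{(q-p+2)/p} = \boldsymbol{d}_{p-2}^{(q-p+2)/p}$. Concatenating these gives exactly the claimed expression.

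The one point needing a little care — and the main obstacle — is checking that the fractional-power notation is consistent across the block boundaries: that is, that cutting $\boldsymbol{d}_0^{(m+1)/p}$ at multiples of $q$ and re-expressing each piece as a fractional power of a shifted word really reassembles to the same word, with no off-by-one error in where truncations fall. This reduces to the associativity-type statement that for $k = k_1 + k_2$ with $0 \le k_i$, one has $\boldsymbol{v}^{k/p}$ (positions $[0,k)$ of $\boldsymbol{v}^\infty$) equal to $\boldsymbol{v}^{k_1/p}$ concatenated with $(\sigma^{k_1\bmod p}\boldsymbol{v})^{k_2/p}$ (positions $[0,k_1)$ then $[k_1, k_1+k_2)$ of $\boldsymbol{v}^\infty$) — a direct consequence of the position-indexing identity above, applied iteratively. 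Since $q > p$, each length-$q$ block spans at least a full period plus a remainder, so $q/p$ and $(q-p+2)/p$ are handled by case $(b)$ of the definition; I would simply verify the base identity once and invoke it $p-1$ times. No deeper ingredient is needed: the whole proposition is a bookkeeping statement about reading a periodic word at shifted offsets.
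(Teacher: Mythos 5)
Your proposal is correct and follows essentially the same route as the paper: the paper's proof establishes the one-step splitting identity $\boldsymbol{d}_0^{(m+1)/p}=\boldsymbol{d}_0^{q/p}\cdot\boldsymbol{d}_1^{(m+1-q)/p}$ with $\boldsymbol{d}_1=\sigma^{q\bmod p}(\boldsymbol{d}_0)$ (which is exactly your position-indexing identity for the periodic word, specialized to one cut) and then iterates it, using the same decomposition $m+1=q(p-2)+(q-p+2)$. The only difference is presentational — you state the windowing identity for $\boldsymbol{v}^{\infty}$ once and apply it at all cut points, whereas the paper peels off one length-$q$ block at a time by induction.
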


 We then return  to the proof. In \eqref{eq:computation of apq via e0}, 
 we  now view, for  each $i \in  \eint{0}{p-2} $,    $S^{iq} \boldsymbol{c}_{pq} $ 
 as a word in $\mathcal{A}^{m+1}$, written as
 \[
   S^{iq}\boldsymbol{c}_{pq} = 0^{iq} \cdot \boldsymbol{d}_{0}^{q/p} \cdot \boldsymbol{d}_{1}^{q/p}\, \cdots \, \boldsymbol{d}_{p-2 - i}^{(q-p+2)/p}, 
 \]
 where $0^{iq}$ is the word obtained with the concatenation of  $iq$  consecutive zeros. From \eqref{eq:computation of apq via e0}, the
 cyclotomic word $\boldsymbol{a}_{pq}$ is expressed in terms of $\boldsymbol{c}_{pq}$ as
 \begin{equation} \label{sum}
 \boldsymbol{a}_{pq}=\sum_{i=0}^{ p-2} S^{iq}\boldsymbol{c}_{pq}=\sum_{i=0}^{p-2} 0^{iq}\boldsymbol{d}_{0}^{q/p}\cdot \boldsymbol{d}_{1}^{q/p}  
  \cdots   \boldsymbol{d}_{p-2 - i}^{(q-p+2)/p}.
\end{equation} 
 We now explain how the sums $ \boldsymbol{\omega}_i$ that intervene  in Theorem  \ref{th:complete characterization of binary cyclotomic polynomials} appear. We consider the  $(p-2)$  following vectors of $\mathbb Q^{q}$   \[\boldsymbol{f}_0=\boldsymbol{d}_{0}^{q/p},\quad   \boldsymbol{f}_1=\boldsymbol{d}_{1}^{q/p},\quad   \boldsymbol{f}_2=\boldsymbol{d}_{2}^{q/p},\ldots,\quad  \boldsymbol{f}_{p-3}=\boldsymbol{d}_{p-3}^{q/p},\] together  with the $(p-2)$  following vectors of $\mathbb{Q}^{q-p+2}$:
\begin{equation} \label{last}\boldsymbol{g}_{0}=\boldsymbol{d}_{0}^{(q-p+2)/p},\  \boldsymbol{g}_{1}=\boldsymbol{d}_{1}^{(q-p+2)/p},\  \ldots, \ \boldsymbol{g}_{p-2}=\boldsymbol{d}_{p-2}^{(q-p+2)/p}.
\end{equation}
 We  use the  expression of the vector  $ \boldsymbol{a}_{pq}$ in \eqref{sum}  as a sum  that uses   $(p-1)$ lines,  indiced from $0$ to $p-2$, each line for each term of the sum, 
\begin{align*}
    \boldsymbol{a}_{pq} = & \,  \boldsymbol{f}_0\cdot \boldsymbol{f}_1\cdot\boldsymbol{f}_2 \cdot\boldsymbol{f}_3\cdots \boldsymbol{f}_{p-3}\cdot\boldsymbol{g}_{p-2} 
    \\
     + & \, 0^{q}\cdot\boldsymbol{f}_0\cdot\boldsymbol{f}_1 \cdot\boldsymbol{f}_2\cdots \boldsymbol{f}_{p-4}\cdot\boldsymbol{g}_{p-3}
     \\
    +& \, 0^{q}\cdot 0^{q}\cdot\boldsymbol{f}_0\cdot\boldsymbol{f}_1\cdots \boldsymbol{f}_{p-5}\cdot\boldsymbol{g}_{p-4}
    \\
     \vdots &  
     \\
     + & \;  0^{q}\cdot 0^{q}\cdot 0^{q}\cdot 0^{q}\cdots  \ \  0^{q}\ \ \cdot \ \ \boldsymbol{g}_{0}.
\end{align*}
Each line exactly  follows the same pattern: it is formed with $(p-2)$ blocks of length $q$, followed with a block of length $(q-p+2)$.  The  line of index $i$  begins with  $i$ blocks of $q$ zeroes,  that perform a	 {\em shift} between blocks.   Then, when we read the result of the  previous sum  by columns, indexed from $0$ to $p-1$,  the $j$-th column contains  (for $0\le j <p-2$) the element $\boldsymbol{f}_0+ \cdots + \boldsymbol{f}_j$, whereas  the last column equals the sum of elements defined in \eqref{last}.  Provided that all the sums be internal,  this implies the equality 
 \begin{align*}
  \boldsymbol{a}_{pq}  = \boldsymbol{d}_{0}^{q/p}\cdot(\boldsymbol{d}_{0}+\boldsymbol{d}_{1})^{q/p}\cdots (\boldsymbol{d}_{0} +  \cdots + \boldsymbol{d}_{{p-3}})^{q/p} \cdot(\boldsymbol{d}_{0} +  \cdots 
     +\boldsymbol{d}_{p-2})^{(q-p+2)/p} \, ,
 \end{align*}   
  and  introduces  the words $  \boldsymbol{\omega}_i$ involved in Theorem \ref{th:complete characterization of binary cyclotomic polynomials}. 
 
\medskip 
The next lemma shows that  the computation of the words $\boldsymbol{\omega}_i=\boldsymbol{d}_0+\cdots +\boldsymbol{d}_{i}$ defined in Theorem 1  indeed  involves internal operations in $\mathcal{A}$.  It will be proven at the end of the section.  It provides  the proof of  Statement $(\ref{item thm1 a})$ of Theorem \ref{th:complete characterization of binary cyclotomic polynomials}.

\begin{lemma}\label{lemma:suma de los di no se sale de A} Under the assumptions and notations of Theorem
\ref{th:complete characterization of binary cyclotomic polynomials}, the words $\boldsymbol{\omega}_{i}$ belong to $ \mathcal{A}^p$ for each $i\in \eint{0}{p-2}$. 
\end{lemma}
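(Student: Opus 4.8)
The plan is to understand the combinatorial structure of the words $\boldsymbol{d}_i$ and their partial sums $\boldsymbol{\omega}_i = \boldsymbol{d}_0 + \cdots + \boldsymbol{d}_i$ explicitly. Recall that $\boldsymbol{d}_0 = 1(-1)0\cdots 0 \in \mathcal{A}^p$, so as a vector $\boldsymbol{d}_0 = \boldsymbol{e}_0 - \boldsymbol{e}_1$ in $\mathbb{Q}^p$, and $\boldsymbol{d}_i = \sigma^{iq}(\boldsymbol{d}_0)$. Since $\sigma$ has order $p$, we have $\boldsymbol{d}_i = \sigma^{iq \bmod p}(\boldsymbol{d}_0) = \sigma^{ir}(\boldsymbol{d}_0)$ where $r = q \bmod p$. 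Thus $\boldsymbol{d}_i$, viewed as a vector, has a $+1$ in position $ir \bmod p$ and a $-1$ in position $(ir+1) \bmod p$, and zeros elsewhere. So $\boldsymbol{\omega}_i$ is a sum of $i+1$ such "bump" vectors, and the question is whether any coordinate of this sum ever reaches $+2$ or $-2$.

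First I would set up notation: write $c_k = ir \bmod p$ for $0 \le i \le p-2$, the positions of the $+1$'s. Because $\gcd(r,p)=1$ (as $p$ is prime and $p \nmid q$ in the case $p>2$, since $q$ is a distinct prime), the map $i \mapsto ir \bmod p$ is injective on $\{0,1,\ldots,p-2\}$; these are $p-1$ distinct residues, hence all of $\mathbb{Z}/p\mathbb{Z}$ except one value, namely $(p-1)r \bmod p = -r \bmod p = p-r$. So the positions of the $+1$'s across $\boldsymbol{d}_0,\ldots,\boldsymbol{d}_{p-2}$ are all residues except $p-r$, and correspondingly the positions of the $-1$'s are all residues except $(p-r)+1 = p-r+1 \bmod p$. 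The key point: $\boldsymbol{\omega}_{p-2} = \sum_{i=0}^{p-2}\boldsymbol{d}_i$ has, at every position except $p-r$, exactly one $+1$ contributed, and at every position except $p-r+1$, exactly one $-1$ contributed; hence $\boldsymbol{\omega}_{p-2}$ equals $\boldsymbol{e}_{p-r+1} - \boldsymbol{e}_{p-r}$ (up to checking the two exceptional positions don't coincide, which holds since $r \not\equiv 0$). In particular the full sum is already internal, sitting in $\mathcal{A}^p$.

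The real work is to show every partial sum $\boldsymbol{\omega}_i$ is internal, not just the final one. The plan is to order the contributions and track, step by step, which positions are "occupied" by a $+1$ or a $-1$. A clean way: I would argue by a telescoping/cancellation structure. Observe that $\boldsymbol{d}_i$ contributes $+1$ at position $ir$ and $-1$ at position $ir+1$; the $-1$ at position $ir+1$ in $\boldsymbol{d}_i$ can only clash (produce $-2$) with a $-1$ at the same position from some earlier $\boldsymbol{d}_j$ with $jr \equiv ir$, i.e. $j=i$ by injectivity — impossible. Similarly a $+2$ at position $k$ would require $ir \equiv jr \pmod p$ with $i\ne j$ — again impossible. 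Therefore no two of the $+1$'s coincide and no two of the $-1$'s coincide among $\boldsymbol{d}_0,\ldots,\boldsymbol{d}_{p-2}$; each position receives at most one $+1$ and at most one $-1$ total, so every partial sum has coordinates in $\{-1,0,+1\}$ automatically. This injectivity observation is in fact the whole proof, and it relies precisely on $\gcd(r,p)=1$.

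The main obstacle — really the only subtle point — is making sure the indexing and the "exceptional residue" bookkeeping are airtight: one must use that $i$ ranges over $\{0,\ldots,p-2\}$ (exactly $p-1$ values, one short of a full residue system), that $r \in \{1,\ldots,p-1\}$ so $r \not\equiv 0 \pmod p$, and that $p > 2$ so that $p-r \ne p-r+1 \pmod p$ genuinely (which needs $p \ge 2$, automatic). I would also remark that this argument simultaneously reproves assertion~(\ref{item thm1 a}) of Theorem~\ref{th:complete characterization of binary cyclotomic polynomials} and gives the byproduct $\boldsymbol{\omega}_{p-2} = \sigma^{p-r}(\boldsymbol{d}_0) = \boldsymbol{e}_{p-r+1 \bmod p} - \boldsymbol{e}_{p-r \bmod p}$, which may be recorded for later use. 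The $p=2$ case is trivial since then there is only $\boldsymbol{\omega}_0 = \boldsymbol{d}_0 = 1(-1) \in \mathcal{A}^2$.
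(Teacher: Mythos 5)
Your proof is correct and rests on exactly the same key fact as the paper's: since $\gcd(r,p)=1$, the positions of the $+1$'s (resp.\ $-1$'s) of $\boldsymbol{d}_0,\dots,\boldsymbol{d}_{p-2}$ are pairwise distinct modulo $p$, so no coordinate of any partial sum can reach $\pm 2$; the paper packages this as an induction on $i$ while you state it globally (each position receives at most one $+1$ and at most one $-1$ in total), which is a purely organizational difference. Only a cosmetic slip: with the paper's left shift $\sigma$, the $+1$ of $\boldsymbol{d}_i$ sits at position $-ir \bmod p$ rather than $ir \bmod p$, but this does not affect the injectivity argument.
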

Now, with Lemma 1,  the cyclotomic word $\boldsymbol{a}_{pq}$ is thus expressed as a concatenation of fractional powers, 
  \[
   \boldsymbol{a}_{pq} =  \boldsymbol{\omega}_{0}^{q/p}\cdot \boldsymbol{\omega}_{1}^{q/p} \,\cdots \, \boldsymbol{\omega}_{p-3}^{q/p} \cdot \boldsymbol{\omega}_{p-2}^{(q-p+2)/p}\, , 
 \]
  it  has length $m+1$ and  coincides with the prefix of length $m+1$ of the word \[
 \boldsymbol{b}_{pq}=  \boldsymbol{\omega}_{0}^{q/p}\cdot \boldsymbol{\omega}_{1}^{q/p} \,\cdots \, \boldsymbol{\omega}_{p-3}^{q/p} \cdot \boldsymbol{\omega}_{p-2}^{q/p}.\]
  This ends the proof of Theorem 1.

  \medskip We now provide the proof of the two results that we  have used.

 \medskip
 \noindent{\bf \em Proof of  Proposition \ref{prop:resultados previos sobre palabras}.}
The first step  of the proof is based on a simple fact which holds for any positive integers $m$, $p$ and $q$,  with $m>p$ and $q \in \eint{p+1}{m}$, and any   word $\boldsymbol{d}_{0}\in \mathcal{A}^{p}$. The fact is the following: if  $r=q\pmod p$, then
\[\boldsymbol{d}^{(m+1)/p}_{0}=\boldsymbol{d}^{q/p}_{0}\cdot\boldsymbol{d}_{1}^{(m+1-q)/p}\quad \hbox{ with }\quad \boldsymbol{d}_{1} = \sigma^{q}(\boldsymbol{d}_{0})= \sigma^{r}(\boldsymbol{d}_{0})\in \mathcal{A}^p.\]
Here we prove it. First, it is clear that
\[\boldsymbol{d}_{0}^{(m+1)/p}=\boldsymbol{d}^{q/p}_{0}\cdot\boldsymbol{v} \quad \hbox{ with }\quad \boldsymbol{v}\in \mathcal{A}^{m+1-q}.\] 

Let $d_j$ and ${v}_j$ denote the $j$th coordinates of   $\boldsymbol{d}_0$ and $\boldsymbol{v}$, respectively. It is clear that  ${v}_j$ equals the $(j+r)\!\!\pmod p$ coordinate of $\boldsymbol{d}_0$. 
     This implies that
     $ \boldsymbol{v}=\boldsymbol{d}_{1}^{(m+1-q)/p}$, 
     with $\boldsymbol{d}_{1}   = \sigma^{q}(\boldsymbol{d}_{0})= \sigma^{r}(\boldsymbol{d}_{0}) = d_{r + 1} \cdots d_{p} d_{0} \cdots d_{r}$.  This is what we wanted to prove.

For the particular choice $m=\varphi(pq)$, it turns out that $\lfloor (m+1)/q \rfloor=p-2$ and $m+1=q-p+2 \pmod q$ (recall that $p>2$).

 Now, we apply an inductive argument.          
      For $ i = 0, \ldots,  p-2 $, set $\boldsymbol{d}_{i} = \sigma^{r}(\boldsymbol{d}_{i-1})$.
  Then 
  \[
  \boldsymbol{d}_{0}^{(m+1)/p} = \boldsymbol{d}_{0}^{q/p} \cdot \boldsymbol{d}_{1}^{(m+1-q)/p}= \boldsymbol{d}_{0}^{q/p} \cdot  \boldsymbol{d}_{1}^{q/p}\cdots \,\boldsymbol{d}_{p -3}^{q/p}\cdot \boldsymbol{d}_{p-2}^{(q-p+2)/p}.      
  \]

 \medskip
 \noindent{\bf\em Proof of   Lemma \ref{prop:resultados previos sobre palabras}.}
Fix $i\in \eint{1}{p-2}$ and  let  $j$ be in the interval $ \eint{0}{p-1}$.   
 With $r=q\pmod p$, the relation \[\boldsymbol{d}_{k}=\sigma^{r}(\boldsymbol{d}_{k-1})=\sigma^{kr}(\boldsymbol{d}_{0})\] shows that 
\[
{d}_{k,j}=
    \begin{cases}
    1\phantom{-}& \hbox{ if  } j+kr=0 \pmod p,\\
    -1 &\hbox{ if  } j+kr=1 \pmod p,\\
    0 & \hbox{ otherwise }.
    \end{cases}
\]
Now, we proceed with an inductive argument.
The statement of the lemma is  clear for $\boldsymbol{\omega}_0=\boldsymbol{d}_0$. 
Suppose that $\boldsymbol{\omega}_{i-1}\in \mathcal{A}^{p}$. 
From the definition of words  $\boldsymbol{\omega}_i$, given in \eqref{eq: definitions of omega_i},  it is clear that \[\boldsymbol{\omega}_{i-1}=\boldsymbol{d}_0+\boldsymbol{d}_1+\dots+\boldsymbol{d}_{i-1}.\] 
Hence, if ${\omega}_{i-1,j}=1$, then  $j+kr=0 \pmod p$ for some $k\in \eint{0}{i-1}$; and similarly, if ${\omega}_{i-1,j}=-1$, then $j+kr=1 \pmod p$ for some $k\in \eint{0}{i-1}$. Remark that the converse might not be true because there might be cancellations between $1$ and $-1$.

Since $\gcd(p,r)=1$, we have $k r\neq i r\pmod p$ for any $k\in\eint{0}{i-1}$. It follows  that  there is no $j\in \eint{0}{p-1}$ such that ${\omega}_{i-1,j}={d}_{i,j}=1$ or ${\omega}_{i-1,j}={d}_{i,j}=-1$. Then,  the sum
$\boldsymbol{\omega}_i=\boldsymbol{\omega}_{i-1}+\boldsymbol{d}_i\in \mathcal{A}^p$, that is, the  sums of two $1$ or two $-1$ never occur.

\section {Algorithms} \label{sec:algo}
 
This section  considers algorithms, and  analyses their complexities. It is organised into three main parts. It begins with  the BCW algorithm,  then explains how it may be used to compute tables of binary cyclotomic polynomials, and finally  compares the BCW algorithms with  other algorithms that have been previously proposed.

\medskip
\noindent{\bf \em Analysis of the BCW  algorithm. } We first analyse the BCW algorithm designed in Section 2.  This will provide the proof of  Theorem \ref{th:cost analysis of the algorithm}.  The analysis of the BCW algorithm 
follows from  the  next two lemmas.

\begin{lemma}\label{lemma: precomputation step}{\rm [Precomputation step].}  
 The output ${\mathcal O}_{p, r}$  of the precomputation step  on the input $(p, r= q \pmod p)$ contains the words $\boldsymbol{\omega}_{i}$ together with their prefixes $\boldsymbol{\omega}_{i}^{r/p} $, for $i= 0, \ldots, p-2$. It  is computed  with  $\Theta(p)$  operations on words of   $\mathcal{A}^p$.% and the arithmetic operation $r=q\pmod p$.   
The total cost  for computing  ${\mathcal O}_{p, r}$ is $\Theta(p^2)$.   
\end{lemma}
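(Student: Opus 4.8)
The plan is to track the precomputation phase line by line and bound the cost of each elementary word operation by $\Theta(p)$, then multiply by the number of such operations, which is $\Theta(p)$. First I would recall that the precomputation phase has two sub-steps: computing the words $\boldsymbol{d}_i$ for $i \in \eint{0}{p-2}$, and computing the words $\boldsymbol{\omega}_i$ together with their prefixes $\boldsymbol{\omega}_i^{r/p}$. Since every word appearing here lives in $\mathcal{A}^p$, the model cost of reading or writing any one of them is $\Theta(p)$, and I will use this as the basic accounting unit throughout.

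Next I would handle the $\boldsymbol{d}_i$'s. The word $\boldsymbol{d}_0 = 1(-1)0\cdots 0$ is written down in $\Theta(p)$ time. Each subsequent $\boldsymbol{d}_i = \sigma^{q}(\boldsymbol{d}_{i-1}) = \sigma^{r}(\boldsymbol{d}_{i-1})$ is obtained by a single cyclic permutation of a length-$p$ word, which costs $\Theta(p)$; here I invoke the identity $\sigma^s = \sigma^{s \bmod p}$ from the statement, so that the shift amount can be taken to be $r < p$ and the permutation is genuinely a single pass over the word. There are $p-1$ such words, so the total cost of sub-step $(i)$ is $(p-1)\cdot\Theta(p) = \Theta(p^2)$. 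For sub-step $(ii)$, I would use the recursion $\boldsymbol{\omega}_0 = \boldsymbol{d}_0$ and $\boldsymbol{\omega}_i = \boldsymbol{\omega}_{i-1} + \boldsymbol{d}_i$: by Lemma~\ref{lemma:suma de los di no se sale de A} each such addition is internal in $\mathcal{A}$, hence is literally a componentwise sum of two length-$p$ words, costing $\Theta(p)$. Again there are $p-1$ additions, contributing $\Theta(p^2)$. Finally, extracting the prefix $\boldsymbol{\omega}_i^{r/p}$ is a truncation to length $r < p$, costing $O(p)$ per word, so $O(p^2)$ in total.

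Summing the three contributions gives $\Theta(p^2)$ for the whole precomputation phase, and the per-word bound is $\Theta(p)$ operations on words of $\mathcal{A}^p$ performed $\Theta(p)$ times, which is exactly the assertion. The statement that the output depends only on the pair $(p, r)$ is immediate from the construction, since $\boldsymbol{d}_0$ depends only on $p$, the shift amount used to produce the $\boldsymbol{d}_i$ is $r$, and the $\boldsymbol{\omega}_i$ and their prefixes $\boldsymbol{\omega}_i^{r/p}$ are built purely from these; this is also already recorded in assertion~(\ref{item thm1 a}) of Theorem~\ref{th:complete characterization of binary cyclotomic polynomials}. I do not expect a genuine obstacle here: the only point requiring care is that the word additions are internal, and that has already been established in Lemma~\ref{lemma:suma de los di no se sale de A}, so the argument is essentially a bookkeeping of $\Theta(p)$ elementary operations each of cost $\Theta(p)$.
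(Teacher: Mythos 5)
Your proposal is correct and follows essentially the same route as the paper's proof: count the $p-2$ cyclic permutations of order $r$ producing the $\boldsymbol{d}_i$, the $p-2$ internal additions producing the $\boldsymbol{\omega}_i$ (internality being exactly Lemma~\ref{lemma:suma de los di no se sale de A}), and the $p-2$ truncations for the prefixes, each acting on words of length $p$ at cost $\Theta(p)$, for a total of $\Theta(p^2)$. Your write-up is if anything slightly more explicit than the paper's (e.g.\ in invoking $\sigma^{q}=\sigma^{r}$ to justify the per-permutation cost), but there is no substantive difference.
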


\begin{proof} In Lemma \ref{lemma:suma de los di no se sale de A}, we already proved that all the additions between words $\boldsymbol{\omega}_i$ and words $\boldsymbol{d}_i$ are internal to the alphabet $\mathcal{A}$.  Given $r=q \pmod p$, the $p-2$ words $\boldsymbol{d}_{i}$ are obtained by applying $p-2$ cyclic permutations of order $r$ to a word of length $p$.  The words $\boldsymbol{\omega}_{i}$ are obtained with  $\Theta(p)$ internal additions of words in $\mathcal{A}^p$.  The computation of the prefixes $\boldsymbol{\omega}_{i}^{r/p} $ only involves $p-2$ truncations. All the words involved has length $p$. Thus, the total cost is $\Theta(p^2)$.

\end{proof}

\begin{lemma} \label{lemma: concatenation step} {\rm [Concatenation step].} The concatenation step computes $\boldsymbol{a}_{pq}$ from the output  ${\mathcal O}_{p, r}$ of the precomputation step in $\Theta(q)$  words operations (mainly concatenations).
The total cost of the concatenation step is $\Theta(pq)$. 
 \end{lemma}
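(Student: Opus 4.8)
The plan is to unfold the definition \eqref{bpq} of $\boldsymbol{b}_{pq}$, to count — reading left to right — the word operations performed when it is assembled from the precomputed data, and then to multiply the number of operations by the cost of a single one.

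First I would record what the precomputation step makes available and what must be produced. By Theorem~\ref{th:complete characterization of binary cyclotomic polynomials}, the cyclotomic word $\boldsymbol{a}_{pq}$ is the prefix of length $\varphi(pq)+1=(p-1)q-(p-2)$ of
\[
\boldsymbol{b}_{pq}=\boldsymbol{\omega}_0^{s}\cdot\boldsymbol{\omega}_0^{r/p}\cdot\boldsymbol{\omega}_1^{s}\cdot\boldsymbol{\omega}_1^{r/p}\cdots\boldsymbol{\omega}_{p-2}^{s}\cdot\boldsymbol{\omega}_{p-2}^{r/p},
\]
a word of length $(p-1)q$; hence $\boldsymbol{a}_{pq}$ is obtained from $\boldsymbol{b}_{pq}$ by deleting its suffix of length $p-2$. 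The output ${\mathcal O}_{p,r}$ of the precomputation step already contains, for every $i\in\eint{0}{p-2}$, the word $\boldsymbol{\omega}_i\in\mathcal{A}^{p}$ together with its prefix $\boldsymbol{\omega}_i^{r/p}\in\mathcal{A}^{r}$ (here $r=q\bmod p<p$). Consequently the concatenation step recomputes nothing: it only appends already available words.

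Next I would describe and count the step itself. It processes the $p-1$ factors $\boldsymbol{\omega}_i^{q/p}=\boldsymbol{\omega}_i^{s}\cdot\boldsymbol{\omega}_i^{r/p}$, $i=0,\dots,p-2$, in order; for each $i$ it appends the block $\boldsymbol{\omega}_i$ (a word of length $p$) exactly $s$ times and then appends the prefix $\boldsymbol{\omega}_i^{r/p}$ (a word of length $r<p$) once, and finally it deletes the suffix of length $p-2$. Thus the factors of index $0\le i\le p-3$ contribute $s(p-2)$ concatenations of words of length $p$ — the figure quoted in Theorem~\ref{th:cost analysis of the algorithm} — while assembling the last factor, the $p-1$ appendings of the short prefixes $\boldsymbol{\omega}_i^{r/p}$, and the suffix deletion add $O(s+p)$ further operations. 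To turn these counts into the announced bounds I would use that $p>2$ and $q>p$, so $s=\lfloor q/p\rfloor\ge1$, and that $q=sp+r$ with $0\le r<p$ gives $q/2<sp\le q$; since moreover $p-2\ge p/3$ for $p\ge3$, the number of word operations is $\Theta(sp)=\Theta(q)$, as claimed.

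Finally, for the cost, I would fix the natural model in which appending a word of length $\ell$ at the tail of the running word (of arbitrary length) costs $\Theta(\ell)$ — realised, for instance, by writing into an array of size $(p-1)q$ preallocated at the start of the step, or into a list of blocks. Then the $\Theta(q)$ tail-appendings of words of length $p$ cost $\Theta(q)\cdot\Theta(p)=\Theta(pq)$, the $p-1$ appendings of words of length $r<p$ cost $O(p^{2})=O(pq)$ (as $p<q$), and the suffix deletion costs $O(p)$; the lower bound $\Omega(pq)$ is immediate since $\boldsymbol{a}_{pq}$ has length $\varphi(pq)+1=\Theta(pq)$. Hence the concatenation step costs $\Theta(pq)$. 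The computation is entirely elementary; the only points requiring a little care are fixing this cost model — so that a tail concatenation of a word of length at most $p$ is $\Theta(p)$ rather than proportional to the whole current length — and the elementary estimate $sp=\Theta(q)$ that identifies the block count $\Theta((p-1)s)$ with $\Theta(q)$. Neither of these is a genuine obstacle.
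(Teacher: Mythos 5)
Your proposal is correct and follows essentially the same route as the paper: read off the structure of $\boldsymbol{b}_{pq}$ from Theorem~\ref{th:complete characterization of binary cyclotomic polynomials}, count $\Theta(sp)=\Theta(q)$ concatenations of precomputed words of length at most $p$, and conclude a total cost of $\Theta(pq)$ after deleting the suffix of length $p-2$. The paper's own proof is terser; your added care about the cost model for tail concatenation and the explicit lower bound via the output length are harmless refinements of the same argument.
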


\begin{proof}  We consider   the quotient $s$  and the remainder $r$ of the division of $q$ by $p$. 
The  word $\boldsymbol{b}_{pq}$ is obtained by  concatenating the words  $\boldsymbol{\omega}_{i}\in \mathcal{A}^p$ or $\boldsymbol{\omega}_{i}^{r/q}\in \mathcal{A}^r$  according to the order prescribed in Theorem \ref{th:complete characterization of binary cyclotomic polynomials}. There are $\Theta(sp)=\Theta(q)$ such concatenations of words of length at most $p$.  
Finally, to  obtain $\boldsymbol{a}_{pq}$ from $\boldsymbol{b}_{pq}$, it suffices to delete the last $p-2$ symbols. 
The total cost of this step is $\Theta(pq)$.  
\end{proof}

% \begin{remark}
% Given a prime number $p$ and any $r\in \eint{1}{p-1}$, it is possible to produce the  $p-1$ words of length $p$ of the precomputation step, $\boldsymbol{\omega}_{0},\ldots, \boldsymbol{\omega}_{p-2}$ together with their prefixes of length $r$. Suppose that $q_{1} < q_{2}$ are primes numbers such that $r=q_{1} \pmod p=q_{2} \pmod p$. Let $s_1$ and $s_2$ be the corresponding quotients of the division by $p$. Then,  the  cyclotomic word $\boldsymbol{a}_{p q_{2}} $ can be computed from $\boldsymbol{a}_{p q_{1}}$ just inserting $s_2-s_1$ times the words $\boldsymbol{\omega_i}$. We return  to this remark in Section XX
 %\end{remark}

\medskip
\noindent{\bf \em  Computing tables of binary cyclotomic polynomials with the BCW algorithm.}
We denote by 
${\mathcal O}_p$  the total precomputation output  relative to a fixed  prime $p$, that is the union of  the precomputation outputs  relative this fixed prime $p$ and  all  possible  values of $r \in \eint{1}{p-1}$.  The cost of building  the total output  ${\mathcal O}_p$ is $\Theta(p^3)$. 

\smallskip
\noindent We discuss the cost of  computing   a list of  polynomials  $\Phi_{p_0 q}$ for a fixed  prime $p_0$ and  a number $t = p_0^\beta$  of primes $q$ that  all satisfy $q \le p_0^\alpha$ (with $\alpha >1$).

\smallskip
\noindent 
 {\bf \em Case $(a)$.}  Consider first the case when we wish to compute  the polynomials $\Phi_{p_0 q}$ for a number  $t= p_0^\beta $  of primes  $q$ that  satisfy  $q \le p_0^\alpha$ (with $\alpha \ge 1$)  and $q \pmod p = r$. We then precompute  only the output ${\mathcal O}_{p_0, r}$, with a cost $\Theta(p_0^2)$, then we perform $t$ computations of cost $p_0^{1+ \alpha}$. The  total cost is  
$$   \Theta(p_0^2) +  \Theta (p_0^{1+ \alpha+\beta})= \Theta (p_0^{1+ \beta+ \alpha}) \, .$$  In this case, the cost of the precomputation is always smaller than the cost of the computation.

\smallskip
\noindent 
{\bf \em Case $(b)$.}   Consider now the case when we wish to compute  the polynomials  $\Phi_{p_0 q}$ for a number  $t= p_0^\beta $ of  primes  $q$ that all satisfy  $q \le p_0^\alpha$, with $\alpha \ge 1$  and various values of $q \pmod p$.  We have to compute the total  precomputation output $\mathcal{O}_{p_0}$ in $\Theta(p_0^3)$ steps,   then  we perform $t$ computations of cost $p_0^{1+ \alpha}$
The  total cost is  
$$   \Theta(p_0^3) +  \Theta (p_0^{1+ \alpha+\beta})= \Theta(p_0^{1 + \max((\alpha +\beta) , 2)})\, .$$
The cost of the precomputation is larger than the cost of the computation,  when    $\alpha + \beta$ is smaller than 2.

\smallskip
\noindent 
{\bf \em Case $(c)$.} Consider finally the case  when we wish to compute  the polynomials  $\Phi_{p_0 q}$ for  any prime $q \le p_0^\alpha$, with $\alpha \ge 1$  with various values of $q \pmod p$. We assume that the list of primes $q \le p_0^\alpha$ is already built. There are $ t = \Theta (1/ \log p_0) p_0^\alpha$  such primes $q$.  
We have to compute the total output  $\mathcal{O}_{p_0}$ in $\Theta(p_0^3)$ steps,   then  we perform $t= O (p_0^\alpha)$ computations of cost $p_0^{1+ \alpha}$
The  total cost is  
$$   \Theta(p_0^3) +  \Theta (p_0^{1+ 2\alpha})= \Theta (p_0^{1+ 2 \alpha})\, . $$
The cost of the precomputation is always  smaller than the cost of the computation.

\medskip
\noindent{\bf \em Comparison with  other algorithms.} %\label{section: perfomance and comparison}
We now  briefly describe algorithms  that have been previously designed to  compute  cyclotomic polynomials. Even when they are designed in the case of a general cyclotomic polynomial,  we  only  analyse their complexity  in the binary case, and compare them to the  complexity of the BCW algorithm.  
In the following, $M(\ell)$  denotes the cost of multiplying two integers of bit-length at most $\ell$.   With fast-multiplication algorithms, $M(\ell)$   is $O(\ell\log \ell \log\log\ell)$.

\medskip
$(i)$ The first algorithm is based on a formula  due to  Lenstra-Lam-Leung \cite{LaLe99,Lenstra79},  and is 
only valid  in the binary case.  
 With this formula, 
the $j$-th coefficient of $\boldsymbol{a}_{pq}$ is  computed via a 
solution $(x,y)$ of an  equation of {the} type $j=xp+yq$ with $|x|<q$ and $|y|<p$. The cost of  computing  one single coefficient  is thus  $O(M(\log q))$. The  total cost for  the whole vector $\boldsymbol{a}_{pq}$  is $\Theta(pq) M(\log q)$. 

\smallskip
$(ii)$  The  other two algorithms are due to 
 Arnold and Monagan and are described in \cite{ArMo11}. They   compute the  cyclotomic vector  in the  case of a general  order $n$, but we describe them for a binary order $n = pq$. They do not perform any  
polynomial divisions.

\smallskip
$(ii) (a)$ The first algorithm   is  a recursive algorithm, called the sparse power series algorithm  (SPS algorithm, for short) because it expresses cyclotomic polynomials  
as products of sparse power series. 
In the  binary case, the recursion is made with  $\Theta(pq)$ additions between integers less than (but close to) $\varphi(pq)$. 
Then, its complexity  is  $\Theta(pq)\log q$.

\smallskip
$(ii) (b)$ The second  algorithm   is called the Big Prime algorithm (BP, for short). It is  adapted to the case when the order  $n$  has  a big prime as a factor. Here,  in the binary case, the big prime is $q$. 
The algorithm  is based on Identity \eqref{eq:tn-1 equal to product of cyclotomic} and  performs a precomputation step which involves $\Theta(p)$ computations of residues  modulo $p$ of integers of size $O(\log q))$. The cost of this step is then $\Theta(p) M(\log q))$. The whole cyclotomic word $\boldsymbol{a}_{pq}$ is obtained via a recursive step which  computes $\varphi(pq)$ residues modulo $p$ of integers of size $O(\log q)$. Thus, the cost of  computing the whole vector $\boldsymbol{a}_{pq}$ is $\Theta(pq)M(\log q)$.

\medskip
The previous discussion shows that the   existing algorithms  are all of  complexity  $O(pq) \cdot E(p, q)$ where the factor $E(p, q)$ is polynomial in $\log q$.    The complexity of the BCW algorithm  does not contain such a factor $E(p, q)$.

\section{Conclusions and further work}
 In this conclusion, we first explain how our results may be extended to another family of polynomials, related to semi-groups. We then discuss how the representation of Theorem \ref{th:complete characterization of binary cyclotomic polynomials}
 may entail results on the length of blocks of  consecutive zeros in the cyclotomic words. The paper ends with a short discussion on the non-binary case. 

\medskip
\noindent
{\bf \em Another polynomial $F_{p,q}$.} We consider  two   numbers  $(p, q)$ with $2<p<q$,  that are coprime but  now {\em not necessarily primes}, and  define the polynomial $F_{p,q}\in \mathbb{Z}[x]$ as follows:
\[
F_{p,q}(x)=\frac{(x^{pq}-1)(x-1)}{(x^p-1)(x^q-1)}.
\]
A folklore result relates the polynomial $F_{p,q}$ to numerical semi-groups (see, \cite{Moree14} for definitions and   proofs).  
A numerical semi-group is the set 
\[
 S(p,q)=\{ap+bq\mid a,b\in \mathbb Z_{\ge 0}\}.
 \]
The polynomial $F_{p,q}$ is called the semi-group polynomial associated with $S(p,q)$  due to  the following identity
\[
F_{p,q}(x)=1+(x-1) \sum_{s\notin S(p,q)}x^s. 
\]
Clearly, $ F_{p,q}$ coincides with the cyclotomic polynomial $\Phi_{pq}$ when $p$ and $q$ are  primes. Also, if we define $F_q=1+x+\cdots +x^{q-1}$, the polynomial $F_{p,q}$ satisfies  the  identity \eqref{eq:identidad de partida} that is our starting point in the proof of Theorem \ref{th:complete characterization of binary cyclotomic polynomials}, namely
\[{(1-x^{q})} F_{p,q}(x)={(1-x)}F_{q}(x^{p}).\]
This entails that all
our results 
also hold for semi-groups polynomials $F_{p,q}$. 

\medskip
\noindent{\bf \em  Proving other properties for $\Phi_{pq}$ and $F_{p, q}$.}
The representation  provided  in Theorem  \ref{th:complete characterization of binary cyclotomic polynomials}  implies that the coefficients of $\Phi_{pq}$ belong to $\mathcal{A}$. 
A parameter of interest is the maximum gap of these polynomials, that we now define:  for a given polynomial $f(x)=b_1x^{n_1}+b_2x^{n_2}+\dots+b_{k}x^{n_k}$, with $b_{i}$ all nonzero and $n_{1}<n_{2}<\dots <n_{k}$, the maximum gap is defined as 
\[
g(f)=\max_{1\le i<k}(n_{i+1}-n_i),\qquad g(f)=0 \hbox{ if } k=1.
\]
The fact that $g(F_{p,q})=p-1$ was proven in \cite{HongLeePark12} for binary  cyclotomic polynomials $\Phi_{pq}$ and in \cite{Moree14} for semi-group polynomials. 
In \cite{Camburu16,Zhang16},  it is proven that the number of maximum gaps in binary cyclotomic polynomials is  $2\lfloor q/p\rfloor$. 

\smallskip

Our description in terms of words easily entails the inequality 
$g(F_{p,q})\ge p-1$ and  proves that there are, at least, $2\lfloor q/p\rfloor$ maximum gaps. The upper bounds   will be  recovered from our Theorem \ref{th:complete characterization of binary cyclotomic polynomials} {\em provided that} the possible cancellations $1+(-1)$  {\em be controlled} in the sums $\boldsymbol{\omega}_{i-1}+\boldsymbol{d}_{i}$.  This is the aim of a further work. 

\medskip\noindent{\bf \em Possible extensions to the non-binary case?}  Our results seem to be  specific to the binary case where the order is $n= pq$. In the case when  $n$  is squarefree with at least three prime factors, the coefficients of the cyclotomic polynomial do not any longer belong to a finite alphabet $\mathcal{A}$. It seems thus difficult to easily deal with words on this alphabet  $\mathcal{A}$. 

\medskip
\noindent{\bf Acknowledgments.} The authors wish to thank Brigitte Vall\'ee for many helpful conversations and suggestions.

\end{document}